\documentclass[12pt]{article}
\usepackage{amssymb}
\setlength{\oddsidemargin}{0mm} \setlength{\evensidemargin}{0mm}
\setlength{\topmargin}{-15mm} \setlength{\textheight}{220mm}
\setlength{\textwidth}{155mm}
\usepackage{amsthm}
\usepackage{amsmath}
\usepackage{graphicx}
 \newtheorem{thm}{Theorem}[section]
 \newtheorem{cor}[thm]{Corollary}
 \newtheorem{lem}[thm]{Lemma}
 
 \theoremstyle{definition}
 
 \newtheorem{rem}[thm]{Remark}
 \numberwithin{equation}{section}

\begin{document}
\title{Remarks on the Blowup Criteria for Oldroyd Models}
\author{Zhen Lei\footnote{Shanghai Key Laboratory for Contemporary Applied
 Mathematics; School of Mathematical Sciences, Fudan
  University, Shanghai 200433, China. {\it Email:
  leizhn@yahoo.com, zlei@fudan.edu.cn}}\and Nader Masmoudi\footnote{Courant Institute of
  Mathematical Sciences, New York University, NY 10012, USA.
  {\it Email: masmoudi@cims.nyu.edu
}}\and
  Yi Zhou\footnote{Shanghai Key Laboratory for Contemporary Applied
 Mathematics; School of Mathematical Sciences, Fudan
  University, Shanghai 200433, China. {\it Email: yizhou@fudan.ac.cn}}}
\date{\today}
\maketitle

\begin{abstract}
We provide a new method to prove and improve the Chemin-Masmoudi
criterion for viscoelastic systems of Oldroyd type in \cite{CM} in
two space dimensions. Our method is much easier than the one based
on the well-known \textit{losing a priori estimate} and is
expected to be easily adopted to other problems involving the
losing \textit{a priori} estimate.
\end{abstract}

\textbf{Keyword}: Losing \textit{a priori} estimate, blowup
criteria, Oldroyd model.

\section{Introduction}

In this paper, we are going to study the non-blowup criteria of
solutions of a type of incompressible non-Newtonian fluid flows
described by the Oldroyd-B model in the whole 2D space:
\begin{equation}\label{a1}
\begin{cases}
\partial_tv + v\cdot\nabla v + \nabla p = \nu\Delta v + \mu_1\nabla\cdot\tau,\\[-3mm]\\
\partial_t\tau + v\cdot\nabla \tau + a\tau = Q(\tau, \nabla v) + \mu_2 D(v),
\\[-3mm]\\
\nabla\cdot v = 0,
\end{cases}
\end{equation}
where $v$ is the velocity field, $\tau$ is the non-Newtonian part
of the stress tensor and $p$ is the pressure. The constants $\nu$
(the viscosity of the fluid), $a$ (the reciprocal of the
relaxation time), $\mu_1$ and $\mu_2$ (determined by the dynamical
viscosity of the fluid, the retardation time and $a$) are assumed
to be non-negative. The bilinear term $Q$ has the following form:
\begin{eqnarray}\label{a2}
Q(\tau, \nabla v) = W(v)\tau - \tau W(v) + b\big(D(v)\tau + \tau
D(v)\big).
\end{eqnarray}
Here $b \in [-1, 1]$ is a constant, $D(v) = \frac{\nabla v +
(\nabla v)^t}{2}$ is the deformation tensor and $W(v) =
\frac{\nabla v - (\nabla v)^t}{2}$ is the vorticity tensor. Fluids
of this type have both elastic properties and viscous properties.
More discussions and the derivation of Oldroyd-B model \eqref{a1}
can be found in Oldroyd \cite{Oldroyd} or Chemin and Masmoudi
\cite{CM}.

There has been a lot of work on the existence theory of Oldroyd
model \cite{GS-1, GS-2, FG, LM, CM, Lei}. In particular, the
following Theorem is established by Chemin and Masmoudi in
\cite{CM}:

\bigskip

\textit{Theorem (Chemin and Masmoudi): In two space dimensions,
the solutions to the Oldroyd model \eqref{a1} with smooth initial
data do not develop singularities for $t \leq T$ provided that
\begin{eqnarray}\label{a3}
 \int_0^{T}\|\tau(t, \cdot)\|_{L^\infty} +
|b| \ \|\tau(t, \cdot)\|_{L^2}^2 dt < \infty.
\end{eqnarray}
}

\bigskip

To establish the blowup criterion \eqref{a3}, the authors in
\cite{CM} use a losing \textit{a priori} estimate for solutions of
transport equations which was developed by Bahouri and Chemin
\cite{BC} and used later on by a lot of authors (for example, see
\cite{CL, CM, ConM,  LZZ, MZZ08, LM07,Masmoudi08cpam} and the
references therein). Our purpose of this paper is to provide a
simple method which avoids using the complicated losing \textit{a
priori} estimate and to improve the blowup criterion \eqref{a3}
for Oldroyd model \eqref{a1} established by Chemin and Masmoudi
\cite{CM}. To best illustrate our ideas and for simplicity, we
will take $a = 0$ and $\nu = \mu_1 = \mu_2 = b = 1$ throughout
this paper. More precisely, we study the following system
\begin{equation}\label{a4}
\begin{cases}
\partial_tv + v\cdot\nabla v + \nabla p = \Delta v + \nabla\cdot\tau,\\[-4mm]\\
\partial_t\tau + v\cdot\nabla \tau = \nabla v\tau +  \tau(\nabla v)^t + D(v),
\\[-4mm]\\
\nabla\cdot v = 0.
\end{cases}
\end{equation}
We point out here that the results in this paper are obviously
true for general constants $a, \mu_1, \mu_2 \geq 0$, $\nu, b
> 0$ from our proofs.

Our main result concerning system \eqref{a4} is:
\begin{thm}\label{thm-criterion-Oldroyd}
Assume that $(v, \tau)$ is a local smooth solution to the Oldroyd
model \eqref{a4} on $[0, T)$ and    $\|v(0, \cdot)\|_{L^2 \cap \dot{C}^{1 +
\alpha} (\mathbb{R}^2) } + \|\tau(0, \cdot)\|_{ L^1 \cap \dot{C}^\alpha (\mathbb{R}^2)} < \infty$ for some
$\alpha \in (0, 1)$. Then one has
\begin{eqnarray}\nonumber
\|v(t, \cdot)\|_{\dot{C}^{1 + \alpha}} + \|\tau(t,
\cdot)\|_{\dot{C}^\alpha} < \infty
\end{eqnarray}
for all $0 \leq t \leq T$ provided that
\begin{eqnarray}\label{a5}
\|\tau(t, \cdot)\|_{L^1_T({\rm BMO})} < \infty \quad {and}\
\|\tau\|_{L^\infty_T(L^1)} < \infty.
\end{eqnarray}
\end{thm}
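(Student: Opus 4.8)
The plan is to run a standard energy‑type argument in the critical Hölder spaces, but to replace the delicate *losing a priori estimate* by a clean bootstrap that exploits the parabolic smoothing in the velocity equation. The heuristic obstruction is the $\nabla v\,\tau$ term in the $\tau$–equation: controlling $\|\tau\|_{\dot C^\alpha}$ seems to require $\|\nabla v\|_{L^1_T(L^\infty)}$, while the only way to get $\nabla v\in L^\infty$ from the Stokes equation with right‑hand side $\nabla\cdot\tau$ is essentially to know $\tau\in\dot C^\alpha$ — a vicious circle. The point of \eqref{a5} is that $\tau\in L^1_T(\mathrm{BMO})$ is precisely the borderline integrability that, through the maximal‑regularity estimate for the nonstationary Stokes system, still yields $\nabla v\in L^1_T(L^\infty)$ up to a logarithmic factor, and that log can be absorbed by Gronwall.

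First I would set up the basic quantities: by energy estimates on \eqref{a4} (testing the $v$–equation with $v$ and the $\tau$–equation against $\tau$, using the cancellation $\langle \nabla\cdot\tau,v\rangle + \langle D(v),\tau\rangle$‑type identity that is built into Oldroyd systems) together with the $L^1$ control of $\tau$ in \eqref{a5}, one gets $v\in L^\infty_T(L^2)\cap L^2_T(\dot H^1)$ and propagation of the low‑order norm $\|\tau\|_{L^\infty_T(L^1)}$. Next, write the velocity as $v = e^{t\Delta}v_0 + \int_0^t e^{(t-s)\Delta}\mathbb P\big(\nabla\cdot\tau - v\cdot\nabla v\big)(s)\,ds$. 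For the Hölder bound I would apply the heat‑semigroup smoothing estimate $\|e^{(t-s)\Delta}\mathbb P\,\nabla\cdot f\|_{\dot C^{1+\alpha}}\lesssim (t-s)^{-1+\text{(something)}}\|f\|_{\dot C^\alpha}$ in the usual way, but for the crucial $L^\infty$ bound on $\nabla v$ I would instead use the endpoint maximal regularity / Stokes estimate to bound $\|\nabla v(t)\|_{L^\infty}$ by $\|v_0\|_{\dot C^{1+\alpha}}$ plus a term of the form $\int_0^t \|\tau(s)\|_{\mathrm{BMO}}\,\big(1 + \log^+\|\tau(s)\|_{\dot C^\alpha} + \log^+\|v(s)\|_{\dot C^{1+\alpha}}\big)\,ds$, the logarithm being the standard price for replacing $\dot C^\alpha$ by $\mathrm{BMO}$ in an $L^\infty$ estimate on a first derivative (a Brezis–Gallouët / Beale–Kato–Majda type inequality). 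The convection term $v\cdot\nabla v$ contributes lower‑order, subcritical terms that are harmless once $v\in L^2_T(\dot H^1)$ is known and can be folded into the same Gronwall argument.

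Then I would close the estimate for $\tau$: commuting $\dot C^\alpha$ with the transport operator $\partial_t + v\cdot\nabla$ costs $\|\nabla v\|_{L^\infty}$ times $\|\tau\|_{\dot C^\alpha}$, and the source terms $\nabla v\,\tau + \tau(\nabla v)^t + D(v)$ are bounded in $\dot C^\alpha$ by $\|\nabla v\|_{L^\infty}\|\tau\|_{\dot C^\alpha} + \|\tau\|_{L^\infty}\|\nabla v\|_{\dot C^\alpha} + \|\nabla v\|_{\dot C^\alpha}$. The term $\|\nabla v\|_{\dot C^\alpha}$, i.e. $\|v\|_{\dot C^{1+\alpha}}$, is controlled by the heat‑kernel estimate above in terms of $\int_0^t\|\tau(s)\|_{\dot C^\alpha}\,ds$ (this is where the parabolic gain $\dot C^\alpha\to\dot C^{1+\alpha}$ for $\nabla\cdot\tau$ is used, with an integrable singularity in time), and $\|\tau\|_{L^\infty}$ is controlled by interpolation between $\mathrm{BMO}$ and $\dot C^\alpha$, again producing only a logarithm. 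Setting $X(t) = \|v(t)\|_{\dot C^{1+\alpha}} + \|\tau(t)\|_{\dot C^\alpha}$, I would arrive at a closed inequality of the form $X(t) \le X(0) + C\int_0^t\big(1+\|\tau(s)\|_{\mathrm{BMO}}\big)\,X(s)\,\log\big(e+X(s)\big)\,ds + (\text{integrable kernel})\!*\!X$, and a log‑Gronwall (Osgood) argument then gives $\sup_{[0,T]}X(t)<\infty$, using exactly that $\|\tau\|_{L^1_T(\mathrm{BMO})}<\infty$.

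The main obstacle, and the place where the new method really has to do work, is the $L^\infty$ bound on $\nabla v$ driven only by $\tau\in\mathrm{BMO}$: one must show that the nonstationary Stokes operator $f\mapsto \nabla\!\int_0^t e^{(t-s)\Delta}\mathbb P\,\nabla\cdot f(s)\,ds$ maps (roughly) $L^1_T(\mathrm{BMO})$ into $L^1_T(L^\infty)$ up to a logarithmic loss involving the $\dot C^\alpha$ norm of the solution. This is a borderline estimate — $\mathrm{BMO}$ is exactly the endpoint at which $L^\infty$ control of one derivative fails logarithmically — and handling the logarithm carefully, keeping it multiplied by the integrable weight $\|\tau(s)\|_{\mathrm{BMO}}$ so that Osgood's lemma applies, is the crux. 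Everything else is bookkeeping with heat‑kernel bounds, the transport‑equation commutator estimate in $\dot C^\alpha$, and interpolation inequalities.
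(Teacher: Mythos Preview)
Your overall architecture is right (control $\|v\|_{\dot C^{1+\alpha}}$ by parabolic smoothing from $\|\tau\|_{\dot C^\alpha}$; control $\|\tau\|_{\dot C^\alpha}$ by a transport estimate with coefficient $\|\nabla v\|_{L^\infty}+\|\tau\|_{L^\infty}$; close with a logarithmic interpolation). But the step you single out as ``the crux'' is also where your plan breaks.

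\textbf{The gap.} You claim a \emph{pointwise-in-time} bound of the type
\[
\|\nabla v(t)\|_{L^\infty}\ \lesssim\ \|v_0\|_{\dot C^{1+\alpha}}+\int_0^t\|\tau(s)\|_{\mathrm{BMO}}\bigl(1+\log^+\!X(s)\bigr)\,ds ,
\]
calling it ``endpoint maximal regularity for Stokes''. No such estimate holds under $\tau\in L^1_T(\mathrm{BMO})$. In Littlewood--Paley terms the Duhamel formula gives
\[
\|\Delta_q\nabla v(t)\|_{L^\infty}\ \lesssim\ \int_0^t 2^{2q}e^{-c2^{2q}(t-s)}\|\Delta_q\tau(s)\|_{L^\infty}\,ds,
\]
and since the kernel $2^{2q}e^{-c2^{2q}r}$ has $L^1_r$-norm $O(1)$ but $L^\infty_r$-norm $2^{2q}$, an $L^1_T$ control of $\|\tau\|_{B^0_{\infty,\infty}}$ (or $\mathrm{BMO}$) gives you nothing uniform in $q$ at a fixed $t$. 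Summing over $q$ reproduces the non-integrable $(t-s)^{-1}$ singularity you would also see from $\nabla^2 e^{(t-s)\Delta}$. Consequently the Osgood inequality $X(t)\le X(0)+C\!\int_0^t(1+\|\tau\|_{\mathrm{BMO}})X\log(e+X)$ that you aim for cannot be derived: the factor $\|\nabla v(s)\|_{L^\infty}$ multiplying $X(s)$ in the transport estimate is simply not dominated by $(1+\|\tau(s)\|_{\mathrm{BMO}})\log(e+X(s))$ at each $s$.

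\textbf{What the paper does instead.} The missing idea is to replace the pointwise control by a \emph{time-integrated} one in the Chemin--Lerner norm $\|v\|_{\widetilde L^1_T(C^1)}=\sup_q\int_0^T 2^q\|\Delta_q v\|_{L^\infty}\,dt$. For this quantity the Duhamel kernel integrates to $O(1)$ and one gets (Lemma~\ref{lem-CM})
\[
\|v\|_{\widetilde L^1_T(C^1)}\ \lesssim\ (\text{data terms that can be made small near }T)\ +\ \sup_q\!\int_0^T\|\Delta_q\tau\|_{L^\infty}\,ds .
\]
A second, time-integrated logarithmic inequality (Lemma~\ref{lem1}, second line) then converts $\|v\|_{\widetilde L^1_T(C^1)}$ into $\int_0^T\|\nabla v\|_{L^\infty}\,ds$ with a $\log$ of the time-integrated $\dot C^\alpha$ norm. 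The closing is \emph{not} Osgood: one first chooses $t_\star$ close to $T$ so that $\|v\|_{\widetilde L^1_{[t_\star,T]}(C^1)}+\|\tau\|_{L^1_{[t_\star,T]}(\mathrm{BMO})}\le\epsilon$, then ordinary Gronwall gives $e+B(t)\le C_\star\,(e+B(t))^{C\epsilon}$, and choosing $C\epsilon<1$ closes algebraically. Your plan needs both of these ingredients---the $\widetilde L^1_T(C^1)$ norm in place of a pointwise bound, and the smallness-on-a-late-interval trick in place of Osgood---to go through.
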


\begin{rem}
This result is in the spirit of  the Beale-Kato-Majda \cite{BKM}
non blowup criterion for $3D$ Euler equations. There were many
subsequent results improving the criterion, see for instance
\cite{Kozonoaniuchi,KOT02,Planchon03cmp}.
In particular our result still holds if we replace $BMO$ with the
Besov space $B^0_{\infty, \infty}$ used in H.~Kozono, T.~Ogawa,
and Y.~Taniuchi. \cite{KOT02} or if we replace  the condition by
the one introduced in Planchon \cite{Planchon03cmp}. In other
words, the first condition in \eqref{a5} in the above theorem can
be weakened to
\begin{eqnarray}\nonumber
  \int_{0}^{T} \| \tau \|_{B^0_{\infty, \infty}}    =  \int_{0}^{T}
 \sup_q   \|\Delta_q\tau(t, \cdot)\|_{L^\infty}dt < \infty,
\end{eqnarray}
or to
\begin{eqnarray}\nonumber
\lim_{\delta\rightarrow 0}\sup_q  \int_{T - \delta}^{T}
\|\Delta_q\tau(t, \cdot)\|_{L^\infty}dt < \epsilon,
\end{eqnarray}
for some sufficiently small $\epsilon > 0$.
The second condition in \eqref{a5} can be replaced by
\begin{eqnarray}\nonumber
\|\tau\|_{L^2_T(L^2)} < \infty,
\end{eqnarray}
which was used in \cite{CM}.
\end{rem}

\begin{rem}
It is easy to check that smooth solutions to \eqref{a4} enjoy the
following energy law:
\begin{eqnarray}\label{energy}
\int_{\mathbb{R}^2}|v(t, \cdot)|^2 + {\rm tr}\tau(t, \cdot) dx +
\int_0^t\int_{\mathbb{R}^2}|\nabla v|^2dxds =
\int_{\mathbb{R}^2}|v(0, \cdot)|^2 + {\rm tr}\tau(0, \cdot) dx,
\end{eqnarray}
which means that
\begin{eqnarray}\label{aprioriestimate}
v \in L^\infty_T(L^2) \cap L^2_T(\dot{H}^1)
\end{eqnarray}
for all $T > 0$ under the second condition of \eqref{a5}. The
\textit{a priori} estimate \eqref{aprioriestimate} will be important
to apply lemma \ref{lem-CM}.
\end{rem}

Finally, it is well-known that if $A =  2 \tau +  I$ is
a positive definite symmetric matrix at $t=0$ (which is actually the physical case),
 then this  property is conserved for later times. Indeed, $A$ satisfies the
equation
$$ \partial_t A  + v\cdot\nabla A  = \nabla v A  +  A (\nabla v)^t .  $$
Also, if at $t=0$, we have  $\det(A(0)) > 1$ and $A$ is positive
definite, then this will also hold for later times (see
\cite{HL07}). In particular this implies that  tr$(\tau) > 0$ (or
one has $- 1 < {\rm tr}(\tau) \leq 0$, which contradicts with
$\det(A) > 1$). Hence, we have the following corollary where we also
use the improved criterion of Planchon.

\begin{cor}\label{cor-criterion-Oldroyd}
There exists an $\epsilon > 0$, such that if
 $(v, \tau)$ is a local smooth solution to the Oldroyd
model \eqref{a4} on $[0, T)$,     $\|v(0, \cdot)\|_{L^2 \cap \dot{C}^{1 +
\alpha} (\mathbb{R}^2) } + \|\tau(0, \cdot)\|_{ L^1 \cap \dot{C}^\alpha (\mathbb{R}^2)} < \infty$ for some
$\alpha \in (0, 1)$ and  that det$(I + 2 \tau(0)) > 1, \,  A = I + 2 \tau(0) $
is positive definite symmetric.
Then one has
\begin{eqnarray}\nonumber
\|v(t, \cdot)\|_{\dot{C}^{1 + \alpha}} + \|\tau(t,
\cdot)\|_{\dot{C}^\alpha} < \infty
\end{eqnarray}
for all $0 \leq t \leq T$ provided that
\begin{eqnarray}\label{a5-cor}
\lim_{\delta\rightarrow 0}\sup_q  \int_{T - \delta}^{T}
\|\Delta_q\tau(t, \cdot)\|_{L^\infty}dt < \epsilon.
\end{eqnarray}
\end{cor}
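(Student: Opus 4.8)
The plan is to derive Corollary \ref{cor-criterion-Oldroyd} directly from Theorem \ref{thm-criterion-Oldroyd} (in its Planchon-improved form described in the remarks), the point being that the positivity structure makes the second condition in \eqref{a5} automatic, so that only the first one, now in the weak form \eqref{a5-cor}, needs to be assumed.

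First I would invoke \cite{HL07}: since $A(0)=I+2\tau(0)$ is positive definite symmetric and $\det(A(0))>1$, and since $A$ is transported along the (incompressible, hence volume preserving) flow of $v$ by $\partial_t A+v\cdot\nabla A=\nabla v\,A+A(\nabla v)^t$, both properties persist, so that for every $t\in[0,T)$ the matrix $A(t,x)$ is positive definite symmetric with $\det(A(t,x))>1$ at every point. Writing $\lambda_1,\lambda_2>0$ for its eigenvalues, if $\mathrm{tr}\,\tau(t,x)\le 0$ held somewhere then $\lambda_1+\lambda_2=\mathrm{tr}\,A=2+2\,\mathrm{tr}\,\tau\le 2$, and the AM--GM inequality would give $\det A=\lambda_1\lambda_2\le\big((\lambda_1+\lambda_2)/2\big)^2\le 1$, contradicting $\det A>1$. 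Hence $\mathrm{tr}\,\tau(t,\cdot)>0$ pointwise, and in particular $\int_{\mathbb{R}^2}\mathrm{tr}\,\tau(t,\cdot)\,dx\ge 0$ for all $t\in[0,T)$.

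Next I would substitute this sign information into the energy identity \eqref{energy}. Since $\int_{\mathbb{R}^2}\mathrm{tr}\,\tau(t,\cdot)\,dx\ge 0$ and $\int_0^t\int_{\mathbb{R}^2}|\nabla v|^2\,dx\,ds\ge 0$, identity \eqref{energy} forces, for all $t\in[0,T)$,
\begin{eqnarray}\nonumber
\|v(t,\cdot)\|_{L^2}^2+\int_0^t\!\!\int_{\mathbb{R}^2}|\nabla v|^2\,dx\,ds\ \le\ C_0:=\int_{\mathbb{R}^2}\big(|v(0,\cdot)|^2+\mathrm{tr}\,\tau(0,\cdot)\big)\,dx,
\end{eqnarray}
and $C_0<\infty$ by the assumptions $v(0)\in L^2$, $\tau(0)\in L^1$. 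This is exactly the \emph{a priori} estimate \eqref{aprioriestimate}, obtained here with no integrability hypothesis on $\tau$ at positive times. In the proof of Theorem \ref{thm-criterion-Oldroyd} the second condition in \eqref{a5} (equivalently $\|\tau\|_{L^2_T(L^2)}<\infty$, by the remarks above) enters only through this estimate, which is what Lemma \ref{lem-CM} requires; once \eqref{aprioriestimate} is available from the positivity of $\mathrm{tr}\,\tau$, that condition may simply be removed. For the first condition in \eqref{a5}, the remarks above record that it can be weakened to Planchon's criterion and, in limiting form, to the smallness condition $\lim_{\delta\to0}\sup_q\int_{T-\delta}^T\|\Delta_q\tau(t,\cdot)\|_{L^\infty}\,dt<\epsilon$ for $\epsilon>0$ small, which is precisely \eqref{a5-cor}. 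Combining \eqref{aprioriestimate} with \eqref{a5-cor}, the hypotheses of the Planchon-improved Theorem \ref{thm-criterion-Oldroyd} are met, and the conclusion $\|v(t,\cdot)\|_{\dot C^{1+\alpha}}+\|\tau(t,\cdot)\|_{\dot C^\alpha}<\infty$ on $[0,T]$ follows.

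The only step needing care is structural rather than computational: one must check that in the proof of Theorem \ref{thm-criterion-Oldroyd} the second condition in \eqref{a5} is never used except to produce \eqref{aprioriestimate} (so it can indeed be dropped under the positivity assumption), and that the $\epsilon$-version of Planchon's criterion is strong enough to absorb the contribution of $\tau$ in the logarithmic bound governing the bootstrap for $\|v\|_{\dot C^{1+\alpha}}+\|\tau\|_{\dot C^\alpha}$, i.e.\ that the constant multiplying $\int_{T-\delta}^T\sup_q\|\Delta_q\tau\|_{L^\infty}$ there is harmless once $\epsilon$ is chosen small. Granting this, Steps one through three are purely algebraic and the corollary is immediate.
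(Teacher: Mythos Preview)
Your proposal is correct and follows essentially the same route as the paper: the paper does not give a separate proof of the corollary but derives it in the paragraph preceding its statement, observing (via \cite{HL07}) that positive definiteness of $A$ and $\det A>1$ persist, that this forces $\mathrm{tr}\,\tau>0$, and that one may then invoke the Planchon-improved form of Theorem~\ref{thm-criterion-Oldroyd}. Your write-up adds welcome detail (the AM--GM step for $\mathrm{tr}\,\tau>0$ and the explicit derivation of \eqref{aprioriestimate} from the energy law), and your closing caveat about verifying that the second hypothesis in \eqref{a5} enters the proof of Theorem~\ref{thm-criterion-Oldroyd} only through \eqref{aprioriestimate} is exactly the structural check the paper leaves implicit.
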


Our proof is based on careful H${\rm \ddot{o}}$lder estimates of
heat and transport equations and the standard Littlewood-Paley
theory, which is much easier than the extensively used losing
\textit{a priori} estimates (for example, see \cite{BC, CL, CM,
ConM, LZZ}). In fact, the main innovation of this paper is that
our analysis may be viewed as a replacement of the losing
\textit{a priori} estimate.  Our method is expected to be easily
adopted to other problems via the losing \textit{a priori}
estimate. Moreover, our criterion slightly improves the one
established by Chemin and Masmoudi (see \cite{CM}).

Finally, let us make a remark on MHD:
\begin{equation}\label{MHD}
\begin{cases}
\partial_tv + v\cdot\nabla v + \nabla p = \nu\Delta v + \nabla\cdot(H\times H),\\[-4mm]\\
\partial_tH + v\cdot\nabla H = H\cdot\nabla v, \\[-4mm]\\
\nabla\cdot v = \nabla\cdot H = 0,
\end{cases}
\end{equation}
where $H$ denotes the magnetic field. A direct corollary of Theorem
\ref{thm-criterion-Oldroyd} for MHD \eqref{MHD} is the following:
\begin{cor}\label{thm-criterion-MHD}
Assume that $(v, H)$ is a local smooth solution to MHD \eqref{MHD}
on $[0, T)$ and $\|v(0, \cdot)\|_{L^2\cap\dot{C}^{1 + \alpha}} +
\|H(0, \cdot)\|_{L^2\cap\dot{C}^\alpha} < \infty$ for some $\alpha
\in (0, 1)$. Then one has
\begin{eqnarray}\nonumber
\|v(t, \cdot)\|_{\dot{C}^{1 + \alpha}} + \|H(t,
\cdot)\|_{\dot{C}^\alpha} < \infty
\end{eqnarray}
all $0 \leq t \leq T$ provided that
\begin{eqnarray}\label{a9}
\int_0^{T}\|(H\times H)(t, \cdot)\|_{{\rm BMO}}dt < \infty.
\end{eqnarray}
\end{cor}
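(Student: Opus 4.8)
The plan is to exhibit MHD \eqref{MHD} as a special case of the Oldroyd system \eqref{a4} and then quote Theorem \ref{thm-criterion-Oldroyd}. Set $\tau:=H\otimes H$, i.e. $\tau_{ij}=H_iH_j$; this is, up to the exact gradient $\frac12\nabla|H|^2$ that one absorbs into the pressure, the Maxwell stress $H\times H$. Since $\nabla\cdot H=0$ one has $\nabla\cdot\tau=H\cdot\nabla H$, so the momentum equation in \eqref{MHD} becomes $\partial_tv+v\cdot\nabla v+\nabla\widetilde p=\nu\Delta v+\nabla\cdot\tau$ with $\widetilde p=p+\frac12|H|^2$. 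A direct computation of the material derivative of $\tau_{ij}=H_iH_j$, using the induction equation $\partial_tH+v\cdot\nabla H=H\cdot\nabla v$, gives
\begin{eqnarray}\nonumber
\partial_t\tau+v\cdot\nabla\tau=\nabla v\,\tau+\tau(\nabla v)^t,
\end{eqnarray}
which is exactly the $\tau$-equation of \eqref{a4} with the coefficient of $D(v)$ set to $0$. By the remark following \eqref{a4}, Theorem \ref{thm-criterion-Oldroyd} stays valid for the constants $a=\mu_2=0$, $\mu_1,\nu>0$, $b=1$, so $(v,\tau)$ is an admissible Oldroyd solution on $[0,T)$.

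Next I would verify the hypotheses of Theorem \ref{thm-criterion-Oldroyd}. The bound $\|v(0,\cdot)\|_{L^2\cap\dot C^{1+\alpha}}<\infty$ is assumed; moreover, in two dimensions $L^2\cap\dot C^\alpha\hookrightarrow L^\infty$, so $H(0,\cdot)\in L^\infty$, whence $\|\tau(0,\cdot)\|_{L^1}\lesssim\|H(0,\cdot)\|_{L^2}^2<\infty$ and $\|\tau(0,\cdot)\|_{\dot C^\alpha}\lesssim\|H(0,\cdot)\|_{L^\infty}\|H(0,\cdot)\|_{\dot C^\alpha}<\infty$. As for \eqref{a5}: its first condition $\|\tau\|_{L^1_T({\rm BMO})}<\infty$ is precisely hypothesis \eqref{a9}, since the BMO seminorms of $H\otimes H$ and $H\times H$ are comparable; its second condition $\|\tau\|_{L^\infty_T(L^1)}<\infty$ follows from the MHD energy identity $\frac{d}{dt}\int_{\mathbb R^2}(|v|^2+|H|^2)+2\nu\int_{\mathbb R^2}|\nabla v|^2=0$, which yields $\|H(t,\cdot)\|_{L^2}^2\le\|v(0,\cdot)\|_{L^2}^2+\|H(0,\cdot)\|_{L^2}^2$ and hence $\|\tau(t,\cdot)\|_{L^1}\lesssim\|H(t,\cdot)\|_{L^2}^2$.

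Theorem \ref{thm-criterion-Oldroyd} would then give $v\in L^\infty([0,T];\dot C^{1+\alpha})$ and $\tau\in L^\infty([0,T];\dot C^\alpha)$, and it remains to pass from the control of $\tau=H\otimes H$ to control of $H$ itself — which cannot be done algebraically by "taking a square root", so I would instead exploit the transport structure of the induction equation. From $\|\nabla v\|_{\dot C^\alpha}=\|v\|_{\dot C^{1+\alpha}}$ and a two-dimensional interpolation inequality bounding $\|\nabla v\|_{L^\infty}$ in terms of $\|v\|_{L^2}$ and $\|v\|_{\dot C^{1+\alpha}}$, together with the energy bound $v\in L^\infty_T(L^2)$, one gets $\nabla v\in L^\infty_T(L^\infty\cap\dot C^\alpha)$; in particular $\nabla v\in L^1_T(L^\infty)$, so the transport equation $\partial_tH+v\cdot\nabla H=H\cdot\nabla v$ gives $H\in L^\infty_T(L^\infty)$. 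Applying the $\dot C^\alpha$ seminorm to this equation and using the standard transport estimate
\begin{eqnarray}\nonumber
\frac{d}{dt}\|H(t,\cdot)\|_{\dot C^\alpha}\lesssim\|\nabla v(t,\cdot)\|_{L^\infty}\|H(t,\cdot)\|_{\dot C^\alpha}+\|H(t,\cdot)\|_{L^\infty}\|\nabla v(t,\cdot)\|_{\dot C^\alpha},
\end{eqnarray}
and closing by Gr\"onwall on $[0,T]$ (the quantities $\|\nabla v\|_{L^\infty}$, $\|\nabla v\|_{\dot C^\alpha}$, $\|H\|_{L^\infty}$ all lying in $L^\infty_T\subset L^1_T$), one concludes $H\in L^\infty([0,T];\dot C^\alpha)$, which is the assertion. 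The genuinely non-formal points — the ones I would be most careful with — are this last step recovering $H$ from $H\otimes H$ via the transport equation, and the verification that $\tau(0)\in L^1\cap\dot C^\alpha$, which uses the embedding $L^2\cap\dot C^\alpha\hookrightarrow L^\infty$ on $\mathbb R^2$.
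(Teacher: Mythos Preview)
Your argument is correct and follows the same overall outline as the paper: identify $H\otimes H$ with the stress tensor, obtain the $\dot C^{1+\alpha}\times\dot C^\alpha$ bound on $(v,H\otimes H)$ from the Oldroyd analysis, and then recover $\|H\|_{\dot C^\alpha}$ from the transport equation for $H$ using $\nabla v\in L^\infty_T(L^\infty\cap\dot C^\alpha)$. The last step, which you flag as the delicate one, is handled in the paper in the same way (``in a standard manner'').

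There is one genuine difference worth noting. The paper sets $\tau=H\otimes H-\tfrac12 I$, so that the $\tau$-equation picks up exactly the $D(v)$ term of \eqref{a4} with $\mu_2=1$; but then $\tau(0)\notin L^1(\mathbb R^2)$ and $\|\tau\|_{L^\infty_T(L^1)}=\infty$, which is precisely why the authors say Theorem~\ref{thm-criterion-Oldroyd} ``seems not [to] directly apply'' and instead rerun the Section~3 argument. You take $\tau=H\otimes H$ with no shift, land in the $\mu_2=0$ case, and invoke the remark after \eqref{a4} that the theorem persists for $\mu_2\ge 0$. This is legitimate: in the proof of Theorem~\ref{thm-criterion-Oldroyd} the only place $\mu_2$ enters is the harmless term $\|v\|_{\dot C^{1+\alpha}}$ in \eqref{d6}, and the energy law \eqref{energy} (hence \eqref{aprioriestimate}) is unchanged when $\mu_2=0$. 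So your route is slightly more direct---it buys you a clean black-box application of the theorem---while the paper's route avoids relying on the informal remark about general constants. Either way the substance is the same.
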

The proof of this corollary is given in section 4. Unfortunately, at
present we are not able to improve \eqref{a9} as
\begin{eqnarray}\nonumber
\int_0^{T}\|H(t, \cdot)\|_{{\rm BMO}}^2dt < \infty,
\end{eqnarray}
and this is still an open problem.

The paper is organized as follows: Section 2 is devoted to recalling
some basic properties of Littlewood-Paley theory and proving two
interpolation inequalities. The proof of Theorem
\ref{thm-criterion-Oldroyd} is given in section 3. In the last
section we sketch the proof of Corollary \ref{thm-criterion-MHD}.

\section{Preliminaries}

Let $\mathcal{S}(\mathbb{R}^2)$ be the Schwartz class of rapidly
decreasing functions. Given $f \in \mathcal{S}(\mathbb{R}^2)$, its
Fourier transform $\mathcal{F}f = \hat{f}$ (inverse Fourier
transform $\mathcal{F}^{-1}g = \breve{g}$, respectively) is
defined by $\hat{f}(\xi) = \int e^{-ix\cdot\xi}f(x)dx$
($\breve{g}(x) = \int e^{ix\cdot\xi}g(\xi)d\xi$, respectively).
Now let us recall the Littlewood-Paley decomposition (see
\cite{Bony, Chemin}). Choose two nonnegative radial functions
$\psi, \phi \in \mathcal{S}(\mathbb{R}^2)$, supported respectively
in $B = \{\xi \in \mathbb{R}^2: |\xi| \leq \frac{4}{3}$ and $C =
\{\xi \in \mathbb{R}^2: \frac{3}{4} \leq |\xi| \leq \frac{8}{3}$
such that
\begin{equation}\nonumber
\psi(\xi) + \sum_{j \geq 0}\phi(\frac{\xi}{2^j}) = 1\ {\rm for}\
\xi \in R^2,\quad \sum_{- \infty \leq j \leq
\infty}\phi(\frac{\xi}{2^j}) = 1\ {\rm for}\ \xi \in R^2/\{0\}.
\end{equation}
The frequency localization operator is defined by
\begin{equation}\label{b3}
\Delta_qf = \int_{\mathbb{R}^2}\check{\phi}(y)f(x - 2^{-
q}y)dy,\quad S_qf = \int_{\mathbb{R}^2}\check{\psi}(y)f(x - 2^{-
q}y)dy.
\end{equation}

The following Lemma is well-known (for example, see \cite{Chemin}).
\begin{lem}\label{lem2}
For $s \in \mathbb{R}$, $1 \leq p \leq \infty$ and integer $q$,
one has
\begin{equation}\label{b2}
\begin{cases}
c2^{qs}\|\Delta_qf\|_{L^p} \leq \|\nabla^s\Delta_qf\|_{L^p} \leq
  C2^{qs}\|\Delta_qf\|_{L^p},\\[-4mm]\\
\big\||\nabla|^sS_qf\big\|_{L^p} \leq  C2^{qs}\|f\|_{L^p},\\[-4mm]\\
ce^{- C2^{2q}t}\|\Delta_qf\|_{L^\infty} \leq
  \|e^{t\Delta}\Delta_qf\|_{L^\infty} \leq  Ce^{-
  c2^{2q}t}\|\Delta_qf\|_{L^\infty}.
\end{cases}
\end{equation}
Here $C$ and $c$ are positive constants independent of $s$, $p$
and $q$.
\end{lem}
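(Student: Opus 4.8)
The three estimates in \eqref{b2} all follow from a single principle: a Fourier multiplier whose symbol is a rescaling of a fixed compactly supported smooth function acts boundedly on every $L^p$, $1\le p\le\infty$, with operator norm controlled by the $L^1$ norm of its convolution kernel, and this $L^1$ norm is scale invariant. Concretely, I would fix a radial $\widetilde\phi\in\mathcal S(\mathbb R^2)$ equal to $1$ on the annulus $C=\{3/4\le|\xi|\le 8/3\}$ and supported in a slightly larger annulus $\{r_1\le|\xi|\le r_2\}$ with $0<r_1<3/4$ and $r_2>8/3$; then $\widetilde\Delta_q\Delta_q=\Delta_q$, where $\widetilde\Delta_q$ has symbol $\widetilde\phi(\xi/2^q)$, since $\widetilde\phi\equiv1$ on $\mathrm{supp}\,\phi$. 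The elementary fact I will use repeatedly is that if $m\in C_c^\infty(\mathbb R^2)$ then the operator with symbol $m(\xi/2^q)$ has kernel $2^{2q}\check m(2^q\cdot)$, whose $L^1$ norm equals $\|\check m\|_{L^1}$ independently of $q$, so Young's inequality bounds its $L^p$ operator norm by $\|\check m\|_{L^1}$. Moreover $\|\check m\|_{L^1}\le C\|m\|_{H^2}$ in two dimensions, by writing $\|\check m\|_{L^1}\le\|(1+|x|^2)^{-1}\|_{L^2}\,\|(1+|x|^2)\check m\|_{L^2}$ and invoking Plancherel; this reduces every estimate to a uniform $H^2$ bound on a rescaled symbol.

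For the first line of \eqref{b2}, write $|\nabla|^s\Delta_q f=|\nabla|^s\widetilde\Delta_q(\Delta_q f)$; the symbol of $|\nabla|^s\widetilde\Delta_q$ is $|\xi|^s\widetilde\phi(\xi/2^q)=2^{qs}g(\xi/2^q)$ with $g(\eta)=|\eta|^s\widetilde\phi(\eta)\in C_c^\infty$, smooth because $\widetilde\phi$ vanishes near the origin, and the principle above gives the upper bound with constant $2^{qs}\|\check g\|_{L^1}$. For the lower bound I recover $\Delta_q f$ from $|\nabla|^s\Delta_q f$ via the operator with symbol $|\xi|^{-s}\widetilde\phi(\xi/2^q)=2^{-qs}h(\xi/2^q)$, $h(\eta)=|\eta|^{-s}\widetilde\phi(\eta)\in C_c^\infty$; since $|\xi|^{-s}\widetilde\phi(\xi/2^q)\cdot|\xi|^{s}\phi(\xi/2^q)=\phi(\xi/2^q)$, this reproduces $\Delta_q f$ and yields $\|\Delta_q f\|_{L^p}\le 2^{-qs}\|\check h\|_{L^1}\,\||\nabla|^s\Delta_q f\|_{L^p}$. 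The second line is identical except that the relevant symbol is $|\xi|^s\psi(\xi/2^q)=2^{qs}m(\xi/2^q)$ with $m(\eta)=|\eta|^s\psi(\eta)$, applied directly to $f$; here $\psi$ does not vanish at the origin, so for non-integer $s$ one checks $\check m\in L^1$ by noting that $|\eta|^s$ is homogeneous of degree $s$ and hence contributes a tail $\sim|x|^{-2-s}$, integrable precisely when $s\ge 0$, which is the range in which the estimate is used.

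For the heat estimate (third line) I would extract the expected exponential rate before applying the principle. For the upper bound, choose $c_0=r_1^2/2>0$ and write $e^{-t|\xi|^2}\widetilde\phi(\xi/2^q)=e^{-c_0 2^{2q}t}\,\Phi_\sigma(\xi/2^q)$ with $\sigma=2^{2q}t$ and $\Phi_\sigma(\eta)=e^{-\sigma(|\eta|^2-c_0)}\widetilde\phi(\eta)$; because $|\eta|^2-c_0\ge r_1^2/2>0$ on $\mathrm{supp}\,\widetilde\phi$, every $\eta$-derivative of $\Phi_\sigma$ is a polynomial in $\sigma$ times $e^{-\sigma(|\eta|^2-c_0)}$ and is therefore bounded uniformly in $\sigma\ge0$, so $\sup_\sigma\|\Phi_\sigma\|_{H^2}<\infty$ and $\sup_\sigma\|\check{\Phi_\sigma}\|_{L^1}<\infty$. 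Applying the resulting operator to $\Delta_q f$ gives the upper bound for every $p$, in particular $p=\infty$. For the lower bound I run the computation backwards: with $C_0=2r_2^2$, the operator with symbol $e^{t|\xi|^2}\widetilde\phi(\xi/2^q)=e^{C_0 2^{2q}t}\,\Psi_\sigma(\xi/2^q)$, where $\Psi_\sigma(\eta)=e^{-\sigma(C_0-|\eta|^2)}\widetilde\phi(\eta)$, recovers $\Delta_q f$ from $e^{t\Delta}\Delta_q f$, and $C_0-|\eta|^2\ge r_2^2>0$ on $\mathrm{supp}\,\widetilde\phi$ gives the same uniform kernel bound, hence $\|\Delta_q f\|_{L^\infty}\le C e^{C_0 2^{2q}t}\|e^{t\Delta}\Delta_q f\|_{L^\infty}$.

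The only genuinely delicate point, and the step I expect to require the most care, is the uniform-in-$\sigma$ control of $\|\Phi_\sigma\|_{H^2}$, equivalently $\|\check{\Phi_\sigma}\|_{L^1}$, in the heat estimate: the $\eta$-derivatives of $\Phi_\sigma$ produce factors of $\sigma=2^{2q}t$ that must be absorbed by the exponential $e^{-\sigma(|\eta|^2-c_0)}$. This is exactly why I choose the extracted rates $c_0,C_0$ strictly inside, respectively outside, the squared radii of the annulus rather than the sharp values $(3/4)^2$ and $(8/3)^2$: this keeps $|\eta|^2-c_0$ and $C_0-|\eta|^2$ bounded away from zero on the whole support of $\widetilde\phi$, so each factor $\sigma^k e^{-\sigma(|\eta|^2-c_0)}$ is bounded by a constant depending only on $k$. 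Since the lemma asks only for some positive constants $c,C$ in the exponents and not for sharp decay rates, this relaxation costs nothing, and all remaining manipulations are routine applications of Young's inequality together with the scaling identity for convolution kernels.
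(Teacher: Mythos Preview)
The paper does not prove this lemma at all; it simply records it as well-known and cites Chemin's monograph \cite{Chemin}. Your argument is precisely the standard proof one finds in that reference: reduce each inequality to an $L^1$ bound on the inverse Fourier transform of a rescaled compactly supported symbol, use the scale invariance of the $L^1$ norm of convolution kernels together with Young's inequality, and for the heat estimate extract the exponential with a slightly relaxed rate so that the residual symbol $\Phi_\sigma$ (respectively $\Psi_\sigma$) has derivatives uniformly bounded in $\sigma\ge0$ on the annular support. The reasoning is correct, including your care in choosing $c_0<r_1^2$ and $C_0>r_2^2$ so that the polynomial-in-$\sigma$ factors produced by differentiation are absorbed by the surviving exponential decay. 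One minor caveat: the constants you produce in the first line depend on $s$ through $\|\check g\|_{L^1}$ and $\|\check h\|_{L^1}$, so the paper's assertion that $c,C$ are independent of $s$ is a slip in the statement rather than something your proof fails to deliver; and in the second line your tail heuristic for $\check m$ with $m(\eta)=|\eta|^s\psi(\eta)$ is adequate for $s>0$, while for $s$ a nonnegative even integer one simply has $m\in\mathcal S$.
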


We also need the following Lemma (see also \cite{MZZ08,Masmoudi09prep}
where similar estimates were used.)
\begin{lem}\label{lem1}
Assume that $\beta > 0$. Then there exists a positive constant $C
> 0$ such that
\begin{equation}\label{b1}
\begin{cases}
\|f\|_{L^\infty} \leq C\big(1 + \|f\|_{L^2} + \|f\|_{{\rm
  BMO}}\ln(e + \|f\|_{\dot{C}^\beta})\big),\\[-4mm]\\
\int_0^T\|\nabla g(s, \cdot)\|_{L^\infty}ds \leq
  C\Big(1 + \int_0^T\|g(s, \cdot)\|_{L^2}ds\\
\quad +\ \sup_{q}\int_0^T\|\Delta_q\nabla g(s,
  \cdot)\|_{L^\infty}ds\ln\big( e   + \int_0^T    \|\nabla
  g(s, \cdot)\|_{\dot{C}^\beta}ds\big)\Big).
\end{cases}
\end{equation}
\end{lem}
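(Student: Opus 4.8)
The plan is to prove both inequalities by the same Littlewood-Paley splitting into low, middle, and high frequencies, optimizing the cutoff. For the first inequality, write $f = S_0 f + \sum_{q \geq 0}\Delta_q f$ and, for a parameter $N \geq 0$ to be chosen, estimate $\|f\|_{L^\infty} \leq \|S_0 f\|_{L^\infty} + \sum_{0 \leq q < N}\|\Delta_q f\|_{L^\infty} + \sum_{q \geq N}\|\Delta_q f\|_{L^\infty}$. The first term is bounded by $C\|f\|_{L^2}$ using Bernstein's inequality (the low-frequency part of Lemma \ref{lem2} with $s=1$, $p=2 \to \infty$ embedding, or directly $\|S_0 f\|_{L^\infty} \lesssim \|S_0 f\|_{L^2}$). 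For the middle range, I would use the standard fact that $\sup_q \|\Delta_q f\|_{L^\infty} \lesssim \|f\|_{\rm BMO}$ (the Littlewood-Paley characterization of BMO, or more precisely $\|\Delta_q f\|_{L^\infty} \lesssim \|f\|_{\rm BMO}$ for each $q \geq 0$ since $\Delta_q$ has mean-zero convolution kernel), so this sum is $\leq C N \|f\|_{\rm BMO}$. For the high range, the Hölder bound $\|\Delta_q f\|_{L^\infty} \lesssim 2^{-q\beta}\|f\|_{\dot C^\beta}$ gives a convergent geometric series bounded by $C 2^{-N\beta}\|f\|_{\dot C^\beta}$. Adding up, $\|f\|_{L^\infty} \leq C\|f\|_{L^2} + C N \|f\|_{\rm BMO} + C 2^{-N\beta}\|f\|_{\dot C^\beta}$, and choosing $N \sim \frac{1}{\beta}\log_2(e + \|f\|_{\dot C^\beta})$ (rounded to a nonnegative integer) makes the last term $O(1)$ and yields the claimed bound after absorbing $\beta$-dependent constants into $C$.

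For the second inequality the strategy is identical, applied to $g$ with the extra derivative and after integrating in time. I would split $\nabla g = \nabla S_0 g + \sum_{0 \leq q < N}\Delta_q \nabla g + \sum_{q \geq N}\Delta_q \nabla g$, take $L^\infty$ norms, integrate over $[0,T]$, and bound the three pieces by $C\int_0^T \|g\|_{L^2}\,ds$, by $N \sup_q \int_0^T \|\Delta_q \nabla g\|_{L^\infty}\,ds$, and by $C 2^{-N\beta}\int_0^T \|\nabla g\|_{\dot C^\beta}\,ds$ respectively; the middle bound again uses $\|\Delta_q h\|_{L^\infty} \lesssim \sup_q \|\Delta_q h\|_{L^\infty}$ summed over $q < N$, and the high-frequency bound uses $\|\Delta_q \nabla g\|_{L^\infty} \lesssim 2^{-q\beta}\|\nabla g\|_{\dot C^\beta}$. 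Then choose $N$ as the integer part of $\frac{1}{\beta}\log_2\bigl(e + \int_0^T \|\nabla g\|_{\dot C^\beta}\,ds\bigr)$. One subtlety: the cutoff $N$ is a fixed number (not time-dependent), which is exactly why the log factor comes out of the time integral as stated — this is the whole point of the "replacement for the losing estimate," so I would be careful to keep $N$ constant in $s$.

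The only genuinely delicate points, which I expect to be the main (minor) obstacles, are: (i) justifying $\sup_q \|\Delta_q f\|_{L^\infty} \lesssim \|f\|_{\rm BMO}$ cleanly — this follows because $\check\phi$ is Schwartz with $\int \check\phi = \phi(0) = 0$, so $\Delta_q f(x) = \int \check\phi(y)\,[f(x-2^{-q}y) - (f)_{B}]\,dy$ against any local average, and the John–Nirenberg / mean-oscillation control gives the bound uniformly in $q$; and (ii) checking that the choice of $N$ is legitimate, i.e. that $N \geq 0$ (true since the argument of the log is $\geq e$) and that replacing the real optimizer by its integer part only costs a constant factor $2^\beta \leq 2$. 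Everything else is a direct application of Bernstein-type estimates from Lemma \ref{lem2} and summation of geometric series; no compactness, no iteration, no fixed point is needed. I would present the argument once in detail for the first inequality and then indicate the obvious modifications for the second.
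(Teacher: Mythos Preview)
Your proposal is correct and follows essentially the same route as the paper: the paper cites the first inequality as well-known (with references to \cite{BKM,Kozonoaniuchi,LZ}) and proves the second one by exactly the three-range Littlewood--Paley splitting you describe, bounding the low piece by $\int_0^T\|g\|_{L^2}$, the middle $N$ blocks by $N\sup_q\int_0^T\|\Delta_q\nabla g\|_{L^\infty}$, the tail by $2^{-\beta N}\int_0^T\|\nabla g\|_{\dot C^\beta}$, and then choosing $N=\frac{1}{\beta}\log_2\big(e+\int_0^T\|\nabla g\|_{\dot C^\beta}\big)$ independent of $s$. Your explicit treatment of the first inequality and the remarks on the BMO block bound and the integer choice of $N$ go slightly beyond what the paper writes, but the argument is the same.
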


\begin{proof}

The first inequality is well-known. For example, see \cite{BKM,
Kozonoaniuchi, LZ}.
To prove the second inequality, we use the
Littlewood-Paley theory to compute that
\begin{eqnarray}\nonumber
&&\int_0^T\|\nabla g(s, \cdot)\|_{L^\infty}ds \leq
  C\int_0^T\|\sum_{q \leq 0}\nabla \Delta_qg(s,
  \cdot)\|_{L^\infty}ds\\\nonumber
&&\quad +\ CN\max_{1 \leq q \leq N}\int_0^T\|\Delta_q\nabla g(s,
  \cdot)\|_{L^\infty}ds + \int_0^T\sum_{q \geq N +
  1}2^{-\beta q}2^{\beta q}\|\Delta_q\nabla g(s, \cdot)\|_{L^\infty}ds\\\nonumber
&&\leq C\Big(\int_0^T\|g(s, \cdot)\|_{L^2}ds + \sup_{1 \leq q \leq
  N}\int_0^TN\|\Delta_q\nabla g(s, \cdot)\|_{L^\infty}ds\\\nonumber
&&\quad +\ 2^{- \beta N}\int_0^T\|\nabla g(s, \cdot)
  \|_{\dot{C}^\beta}ds\Big).
\end{eqnarray}
Then the second inequality in Lemma \ref{lem1} follows by choosing
\begin{equation}\nonumber
N = \frac{1}{\beta}\log_2\big(e +  \int_0^T  \|\nabla g(s,
\cdot)\|_{\dot{C}^\beta}\big) \leq C\ln\big(e  + \int_0^T   \|\nabla
g(s, \cdot)\|_{\dot{C}^\beta}ds\big).
\end{equation}
\end{proof}

\section{Blowup Criteria for Oldroyd-B Model}

This section is devoted to establishing the blowup criterion for
the Oldroyd-B Model \eqref{a1} and proving Theorem
\ref{thm-criterion-Oldroyd}. Our analysis is based on careful
H${\rm \ddot{o}}$lder estimates of heat and transport equations
and the standard Littlewood-Paley theory, which is much easier
than the extensively used losing \textit{a priori} estimates (for
example, see \cite{BC, CL, CM, ConM, LZZ}). Moreover, our
criterion slightly improves the one established by Chemin and
Masmoudi (see \cite{CM}). We divide our proof into two steps. The
first step is focused on establishing some \textit{a priori}
estimates for 2-D Navier-Stokes equations. Then we establish
H${\rm \ddot{o}}$lder estimates for the velocity field $v$ and the
stress tensor $\tau$ in the second step.

\bigskip

\textbf{Step 1. The \textit{a priori} estimates for 2-D
Navier-Stokes equations}.

\bigskip

We need the following Lemma which is basically established by
Chemin and Masmoudi in \cite{CM}. For completeness, the proof will
be also sketched here.

\begin{lem}\label{lem-CM}(Chemin-Masmoudi)
Let $v$ be a a solution of the Navier-Stokes equations with
initial data in $L^2$ and an external force $f \in
\widetilde{L}^1_T(C^{-1}) \cap L^2_T(H^{-1})$:
\begin{equation}\label{c1}
\begin{cases}
\partial_tv + v\cdot\nabla v + \nabla p = \Delta v + f,\\[-4mm]\\
\nabla\cdot v = 0,\\[-4mm]\\
v(0, x) = v_0(x).
\end{cases}
\end{equation}
Then we have the following a priori estimate:
\begin{eqnarray}\label{c2}
&&\|v\|_{\widetilde{L}^1_T(C^1)} \leq
  C\Big(\sup_q\|\Delta_qv_0\|_{L^2}\big(1 - \exp\{-c2^{2q}T\}\big)\\\nonumber
&&\quad +\ \big(\|v_0\|_{L^2} + \|f\|_{L^2_T(\dot{H}^{-1})}\big)
  \|\nabla v\|_{L^2_T(L^2)}^2 + \sup_q\int_0^T\|2^{-q}\Delta_qf(s)\|_{L^\infty}ds
  \Big).
\end{eqnarray}
\end{lem}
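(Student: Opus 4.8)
The plan is to estimate each Littlewood-Paley block $\Delta_q v$ separately and then sum appropriately to recover the $\widetilde{L}^1_T(C^1)$ norm, i.e.\ $\sup_q 2^q \int_0^T \|\Delta_q v(s)\|_{L^\infty}\,ds$. First I would apply $\Delta_q$ to the momentum equation in \eqref{c1} and write the resulting equation in Duhamel form along the heat semigroup:
\begin{equation}\nonumber
\Delta_q v(t) = e^{t\Delta}\Delta_q v_0 - \int_0^t e^{(t-s)\Delta}\Delta_q \mathbb{P}\big(\nabla\cdot(v\otimes v)\big)(s)\,ds + \int_0^t e^{(t-s)\Delta}\Delta_q \mathbb{P}f(s)\,ds,
\end{equation}
where $\mathbb{P}$ is the Leray projector, so that the pressure is eliminated. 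The three terms on the right are then handled in turn: the linear term contributes $\sup_q 2^q \|\Delta_q v_0\|_{L^2}\int_0^T e^{-c2^{2q}s}\,ds \lesssim \sup_q \|\Delta_q v_0\|_{L^2}(1-\exp\{-c2^{2q}T\})$ using the third line of \eqref{b2} together with Bernstein's inequality $\|\Delta_q g\|_{L^\infty} \lesssim 2^q \|\Delta_q g\|_{L^2}$ in two dimensions; the forcing term contributes $\sup_q 2^{-q}\int_0^T \|\Delta_q f(s)\|_{L^\infty}\,ds$ after bounding $\int_0^t e^{-c2^{2q}(t-s)}\,ds \lesssim 2^{-2q}$ and using $\|\mathbb{P}\|$ bounded on each block.

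The heart of the argument is the nonlinear term. Here I would use Bernstein to move the divergence onto a factor $2^q$, then $L^\infty$--$L^1$ or $L^\infty$--$L^2$ smoothing of the heat kernel on the localized piece, writing
\begin{equation}\nonumber
2^q\Big\|\int_0^t e^{(t-s)\Delta}\Delta_q\mathbb{P}\nabla\cdot(v\otimes v)(s)\,ds\Big\|_{L^\infty} \lesssim 2^{2q}\int_0^t e^{-c2^{2q}(t-s)}\|\Delta_q(v\otimes v)(s)\|_{L^\infty}\,ds,
\end{equation}
and then controlling $\|v\otimes v\|$ by the energy quantities. The natural way to close this is to bound $\|\Delta_q(v\otimes v)\|_{L^1_T(L^\infty)}$ or a suitable variant by $\|v\|_{L^\infty_T(L^2)}\|\nabla v\|_{L^2_T(L^2)}$-type products; integrating the factor $2^{2q}e^{-c2^{2q}(t-s)}$ in $t$ gives an $O(1)$ constant, uniformly in $q$, and summing the high frequencies against a Besov-type bound on $v\otimes v$ produced by the a priori control $v\in L^\infty_T(L^2)\cap L^2_T(\dot H^1)$ yields the term $(\|v_0\|_{L^2}+\|f\|_{L^2_T(\dot H^{-1})})\|\nabla v\|_{L^2_T(L^2)}^2$. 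One uses the energy estimate for \eqref{c1} — multiply by $v$, integrate, use $f\in L^2_T(\dot H^{-1})$ — to bound $\|v\|_{L^\infty_T(L^2)}^2 + \|\nabla v\|_{L^2_T(L^2)}^2$ by $\|v_0\|_{L^2}^2 + \|f\|_{L^2_T(\dot H^{-1})}^2$, which is where the first prefactor comes from.

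I expect the main obstacle to be the bilinear estimate on $v\otimes v$ at high frequencies: one must organize the paraproduct decomposition of $v\otimes v$ so that, after multiplying by $2^{2q}$ and integrating the heat factor, the sum over $q$ converges and reproduces exactly the quadratic quantity $\|\nabla v\|_{L^2_T(L^2)}^2$ rather than a higher norm of $v$. In $2$D this is delicate because $\dot H^1$ barely fails to embed in $L^\infty$; the trick (following \cite{CM}) is to keep one factor in $L^\infty_T(L^2)$ and differentiate the other, exploiting $\nabla\cdot v = 0$ to write $\nabla\cdot(v\otimes v) = v\cdot\nabla v$ and distributing derivatives via Bony's decomposition so each term carries exactly one gradient measured in $L^2_tL^2_x$. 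The low-frequency part $\sum_{q\le 0}$ is comparatively easy, handled by Bernstein and the $L^2$ energy bound, giving the $\int_0^T\|v\|_{L^2}$ contribution absorbed into the stated right-hand side. Once these pieces are assembled, summing over $q$ (in the $\widetilde L^1_T(C^1)$, i.e.\ $\ell^\infty$-in-$q$, sense) gives \eqref{c2}.
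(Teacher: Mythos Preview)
Your overall architecture --- Duhamel along the heat semigroup, Bernstein to convert $\|\Delta_q v_0\|_{L^\infty}$ to $2^q\|\Delta_q v_0\|_{L^2}$, and the $\widetilde{L}^1_T(C^{-1})$ bound on $f$ after integrating the exponential --- matches the paper exactly, and the linear and forcing terms are handled correctly.

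The gap is in the nonlinear term. You assert that $\sup_q\int_0^T\|\Delta_q(v\otimes v)(s)\|_{L^\infty}\,ds$ can be bounded \emph{directly} by energy quantities of the form $\|v\|_{L^\infty_T(L^2)}\|\nabla v\|_{L^2_T(L^2)}^2$, but this is precisely what fails in two dimensions because $\dot H^1\not\hookrightarrow L^\infty$: in the paraproduct piece $T_v v$ one is left with a low-frequency factor $\|S_{q-1}v\|_{L^\infty}$ that grows logarithmically in $q$ and cannot be controlled by $\|v\|_{L^\infty_T L^2}$ and $\|\nabla v\|_{L^2_T L^2}$ alone. The paper does \emph{not} close the bilinear estimate this way. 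Instead, in each paraproduct piece it interpolates so as to retain a factor $\|v\|_{\widetilde L^1_T(C^1)}^{1/2}$ of the very quantity being estimated; the outcome of the Bony decomposition is
\[
\sup_q\int_0^T\|\Delta_q(v\otimes v)\|_{L^\infty}\,ds \ \leq\ C\,\|v\|_{L^\infty_T(L^2)}^{1/2}\,\|\nabla v\|_{L^2_T(L^2)}\,\|v\|_{\widetilde L^1_T(C^1)}^{1/2},
\]
and this half-power of the unknown is then absorbed into the left-hand side by Young's inequality, producing exactly the term $\|v\|_{L^\infty_T(L^2)}\|\nabla v\|_{L^2_T(L^2)}^2$, which the basic energy inequality \eqref{c3} converts to $(\|v_0\|_{L^2}+\|f\|_{L^2_T(\dot H^{-1})})\|\nabla v\|_{L^2_T(L^2)}^2$. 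This absorption step is the missing idea in your proposal; without it the argument does not close.
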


\begin{proof}
First of all, applying the operator $\Delta_q$ to the 2-D
Navier-Stokes equations \eqref{c1} and then using Lemma \ref{lem2}
and the standard energy estimate, we deduce that
\begin{eqnarray}\nonumber
&&\frac{1}{2}\frac{d}{dt}\|\Delta_qv\|_{L^2}^2 +
  c2^{2q}\|\Delta_qv\|_{L^2}^2\\\nonumber
&&\leq \|2^q\Delta_qv\|_{L^2}\big(\|2^{-q}\Delta_qf\|_{L^2} +
  \|\Delta_q(v \otimes v)\|_{L^2}\big)\\\nonumber
&&\leq c2^{2q}\|\Delta_qv\|_{L^2}^2 + C\big(\|2^{-q}
  \Delta_qf\|_{L^2}^2 + \|\Delta_q(v \otimes v)\|_{L^2}^2\big).
\end{eqnarray}
Integrating with respect to time and summing over $q$, we get
\begin{eqnarray}\nonumber
&&\sum_q\|\Delta_qv\|_{L^\infty_T(L^2)}^2 \leq
  \|v_0\|_{L^2}^2 + C\big(\|f\|_{L^2_T(\dot{H}^{-1})}^2
  + \|v \otimes v\|_{L^2_T(L^2)}^2\big)\\\nonumber
&&\leq \|v_0\|_{L^2}^2 + C\big(\|f\|_{L^2_T(\dot{H}^{-1})}^2 +
  \|v\|_{L^\infty_T(L^2)}^2\|\nabla v\|_{L^2_T(L^2)}^2\big),
\end{eqnarray}
where we used the standard interpolation inequality $\|v\|_{L^4}^2
\leq C\|v\|_{L^2}\|\nabla v\|_{L^2}$. Recalling the basic energy
estimate
\begin{eqnarray}\label{c3}
\|v\|_{L^\infty_T(L^2)}^2 + \|\nabla v\|_{L^2_T(L^2)}^2 \leq
\|v_0\|_{L^2}^2 + \|f\|_{L^2_T(\dot{H}^{-1})}^2,
\end{eqnarray}
one has
\begin{eqnarray}\label{c4}
&&\sum_q\|\Delta_qv\|_{L^\infty_T(L^2)}^2 \leq
  C\big(\|v_0\|_{L^2}^2 + \|f\|_{L^2_T(\dot{H}^{-1})}^2\big)\\\nonumber
&&\quad \times\big(1 + \|v_0\|_{L^2}^2 +
\|f\|_{L^2_T(\dot{H}^{-1})}^2\big).
\end{eqnarray}

Next, let us apply $\Delta_q$ to \eqref{c1} and use Lemma
\ref{lem2} to estimate
\begin{eqnarray}\nonumber
&&\|\Delta_qv(t)\|_{L^\infty} \leq C\|\Delta_q
  v_0\|_{L^\infty}e^{-c2^{2q}t}\\\nonumber
&&\quad +\ \int_0^t\big(\|\Delta_qf(s)\|_{L^\infty} +
  \|\Delta_q\nabla\cdot(v \otimes v)(s)\|_{L^\infty} \big)
  e^{-c2^{2q}(t - s)}ds,
\end{eqnarray}
which yields
\begin{eqnarray}\label{c5}
&&\|v\|_{\widetilde{L}^1_T(C^1)} \leq C\sup_q\int_0^T\|\Delta_q
  v_0\|_{L^\infty}2^qe^{-c2^{2q}t}dt\\\nonumber
&&\quad +\ C\sup_q\int_0^T\int_0^t\|\Delta_qf(s)
  \|_{L^\infty}2^qe^{-c2^{2q}(t - s)}dsdt\\\nonumber
&&\quad +\ C\sup_q\int_0^T\int_0^t\|\Delta_q\nabla\cdot(v \otimes
  v)(s)\|_{L^\infty}2^qe^{-c2^{2q}(t - s)}dsdt\\\nonumber
&&\leq C\sup_q\|\Delta_q v_0\|_{L^2}\big(1 -
  e^{-c2^{2q}T}\big)\\\nonumber
&&\quad +\ C\sup_q\int_0^T\|\Delta_q(v \otimes
  v)(s)\|_{L^\infty}ds + \|f\|_{\widetilde{L}^1_T
  (C^{-1})}.
\end{eqnarray}
Using  the Bony's decomposition, one can write
\begin{eqnarray}\nonumber
&&\|\Delta_q(v \otimes v)(s)\|_{L^\infty} =
  \sum_{|p - r| \leq 1}\|\Delta_q(\Delta_pv \otimes
  \Delta_rv)(s)\|_{L^\infty}\\\nonumber
&&\quad +\ \sum_{p - r \geq 2}\|\Delta_q(\Delta_pv
  \otimes \Delta_rv)(s)\|_{L^\infty}
  + \sum_{r - p \geq 2}\|\Delta_q(\Delta_pv \otimes
  \Delta_rv)(s)\|_{L^\infty}.
\end{eqnarray}
A straightforward computation gives
\begin{eqnarray}\nonumber
&&\int_0^T\sum_{|p - r| \leq 1}\|\Delta_q(\Delta_pv \otimes
  \Delta_rv)(s)\|_{L^\infty}ds\\\nonumber
&&\leq C\int_0^T\sum_{|p - r| \leq 1}2^q\|\Delta_q(\Delta_pv
  \otimes \Delta_rv)(s)\|_{L^2}ds\\\nonumber
&&\leq C\int_0^T\sum_{|p - r| \leq 1,\ p \geq q - 3}
  2^{q - \frac{p + r}{2}}\|2^p\Delta_pv\|_{L^\infty}^{\frac{1}{2}}
  \|\Delta_rv\|_{L^2}^{\frac{1}{2}}\|\Delta_pv\|_{L^\infty}^{\frac{1}{2}}
  \|2^r\Delta_rv\|_{L^2}^{\frac{1}{2}}ds\\\nonumber
&&\leq C\int_0^T\sum_{|p - r| \leq 1,\ p \geq q - 3}
  2^{q - \frac{p + r}{2}}\|2^p\Delta_pv\|_{L^\infty}^{\frac{1}{2}}
  \|\Delta_rv\|_{L^2}^{\frac{1}{2}}\|2^p\Delta_pv\|_{L^2}^{\frac{1}{2}}
  \|2^r\Delta_rv\|_{L^2}^{\frac{1}{2}}ds\\\nonumber
&&\leq C\|v\|_{L^\infty_T(L^2)}^{\frac{1}{2}}\|\nabla v
  \|_{L^2_T(L^2)}\|v\|_{\widetilde{L}^1_T(C^1)}^{\frac{1}{2}}.
\end{eqnarray}
Similarly, one has
\begin{eqnarray}\nonumber
&&\int_0^T\Big(\sum_{p - r \geq 2}\|\Delta_q(\Delta_pv
  \otimes \Delta_rv)(s)\|_{L^\infty}
  + \sum_{r - p \geq 2}\|\Delta_q(\Delta_pv \otimes
  \Delta_rv)(s)\|_{L^\infty}\Big)ds\\\nonumber
&&\leq C\int_0^T\sum_{p - r \geq 2,\ | p - q| \leq 2}
  \|\Delta_pv\|_{L^\infty}\|\Delta_rv\|_{L^\infty}ds\\\nonumber
&&\leq C\int_0^T\sum_{p - r \geq 2,\ | p - q| \leq 2}
  \|2^p\Delta_pv\|_{L^\infty}^{\frac{1}{2}}\|2^p\Delta_pv
  \|_{L^2}^{\frac{1}{2}}2^{r - \frac{p}{2}}\|\Delta_rv\|_{L^2}ds\\\nonumber
&&\leq C\|v\|_{L^\infty_T(L^2)}^{\frac{1}{2}}\|\nabla v
  \|_{L^2_T(L^2)}\|v\|_{\widetilde{L}^1_T(C^1)}^{\frac{1}{2}}.
\end{eqnarray}
Using the above two estimate, one can improve \eqref{c5} as
\begin{eqnarray}\nonumber
\|v\|_{\widetilde{L}^1_T(C^1)} \leq C\Big(\sup_q\|\Delta_q
v_0\|_{L^2}\big(1 - e^{-c2^{2q}T}\big) +
\|v\|_{L^\infty_T(L^2)}\|\nabla v\|_{L^2_T(L^2)}^2 +
\|f\|_{\widetilde{L}^1_T(C^{-1})}\Big).
\end{eqnarray}
Consequently, one can deduce \eqref{c2} from and the basic energy
estimate \eqref{c3} and the above inequality.
\end{proof}

Now let us assume that $f \in L^1_T(\dot{C}^{-1}) \cap
L^2_T(H^{-1})$. By Lemma \ref{lem-CM}, it is easy to see that
\begin{eqnarray}\label{c6}
&&\|v\|_{\widetilde{L}^1_{[t_0, T]}(C^1)}\leq
  C\Big(\sup_q\|\Delta_qv(t_0)\|_{L^2}\big(1 - \exp\{-c2^{2q}(T - t_0)\}\\\nonumber
&&\quad +\ \int_{t_0}^T\sup_q\|2^{-q}\Delta_qf(s)\|_{L^\infty}ds +
  \big(\|v(t_0)\|_{L^2} + \|f\|_{L^2_{[t_0, T]}(\dot{H}^{-1})}\big)
  \|\nabla v\|_{L^2_{[t_0, T]}(L^2)}^2\Big)
\end{eqnarray}
holds for any $t_0 \in [0, T)$. By \eqref{c4}, one can choose some
$q_0$ such that
\begin{eqnarray}\nonumber
\sup_{q > q_0}\|\Delta_qv\|_{L^\infty_{[t_0, T]}(L^2)}^2 \leq
\frac{\epsilon}{4C}.
\end{eqnarray}
Furthermore, by the basic energy estimate \eqref{c3}, one can
choose some $t_1 \in [t_0, T)$ such that
\begin{eqnarray}\nonumber
&&\sup_{t_1 \leq t \leq T}\sup_{q \leq q_0}
  \|\Delta_qv(t)\|_{L^2}\big(1 - \exp\{-c2^{2q}(T - t)\}\\\nonumber
&&\leq \sup_{t_1 \leq t \leq T}\|v(t)\|_{L^2}2c2^{2q_0}(T -
  t_1)\\\nonumber
&&\leq  C2^{2q_0}\big(\|v_0\|_{L^2} + \|f\|_{L^2_{[0,
  T]}(\dot{H}^{-1})}\big)(T - t_1) \leq \frac{\epsilon}{4C}.
\end{eqnarray}
Consequently, one has
\begin{eqnarray}\label{c7}
\sup_{t_1 \leq t \leq T}\sup_q\|\Delta_qv(t)\|_{L^2}\big(1 -
\exp\{-c2^{2q}(T - t)\} \leq \frac{\epsilon}{2C}.
\end{eqnarray}
On the other hand, it is obvious that one can choose some $t_2 \in
[t_1, T)$ such that
\begin{eqnarray}\label{c8}
&&\big(\sup_{t_2 \leq t \leq T}\|v(t)\|_{L^2} + \|f\|_{L^2_{[t_2,
  T]}(\dot{H}^{-1})}\big)\|\nabla v\|_{L^2_{[t_2, T]}(L^2)}^2\\\nonumber
&&\quad +\ \int_{t_2}^T\sup_q\|2^{-q}\Delta_qf(s)\|_{L^\infty}ds
  \leq \frac{\epsilon}{2C}.
\end{eqnarray}
Combining \eqref{c7} and \eqref{c8} with \eqref{c6}, one arrives
at
\begin{eqnarray}\label{c9}
\|v\|_{\widetilde{L}^1_{[t_2, T]}(C^1)} \leq \epsilon.
\end{eqnarray}

\bigskip

\textbf{Step 2. H${\rm \ddot{o}}$lder estimate for $v$ and
$\tau$.}

\bigskip

First of all, by \eqref{c9} and the assumption \eqref{a5}, one can
choose $t_\star \in [t_2, T)$ such that
\begin{eqnarray}\label{d1}
\|v\|_{\widetilde{L}^1_{[t_\star, T]}(C^1)} \leq \epsilon,\quad
\|\tau\|_{L^1_{[t_\star, T]}({\rm BMO})} \leq \epsilon.
\end{eqnarray}
For $0 \leq t < T$, define
\begin{eqnarray}\nonumber
A(t) = \sup_{0 \leq s < t}\|v(t, \cdot)\|_{\dot{C}^{1 +
\alpha}},\quad B(t) = \sup_{0 \leq s < t}\|\tau(t,
\cdot)\|_{\dot{C}^{\alpha}}.
\end{eqnarray}
We are about to estimate $A(t)$ and $B(t)$ for $0 \leq t < T$. For
this purpose, let us apply $\Delta_q$ to the Oldroyd-B system
\eqref{a4} to get
\begin{equation}\label{d2}
\begin{cases}
\partial_t\Delta_qv - \Delta \Delta_qv + \nabla\Delta_q p
  = \nabla\cdot\Delta_q(\tau - v\otimes v),\\[-3mm]\\
\partial_t\Delta_q\tau + v\cdot\nabla \Delta_q\tau = \Delta_q\big(\nabla v\tau
  + \tau(\nabla v)^t + D(v)\big) + [v\cdot
  \nabla, \Delta_q]\tau.
\end{cases}
\end{equation}

Let us first estimate $\|v(t, \cdot)\|_{\dot{C}^{1 + \alpha}}$. By
the first equation in \eqref{d2} and Lemma \ref{lem2}, one has
\begin{eqnarray}\label{d3}
&&\|\Delta_qv\|_{L^\infty} \leq Ce^{-c2^{2q}t}
  \|\Delta_qv(0)\|_{L^\infty}\\\nonumber
&&\quad +\ \int_{0}^te^{-c2^{2q}(t - s)} \big\|
  \nabla\cdot\Delta_q(\tau - v\otimes
  v)\big\|_{L^\infty}(s)ds.
\end{eqnarray}
Multiplying $2^{q(1 + \alpha)}$ to both sides of \eqref{d3}, we have
\begin{eqnarray}\nonumber
&&\|\Delta_qv(t, \cdot)\|_{\dot{C}^{1 + \alpha}} \leq
  C\|v(0, \cdot)\|_{\dot{C}^{1 + \alpha}}\\\nonumber
&&\quad +\ C\int_{0}^t2^{2q}e^{-c2^{2q}(t - s)}
  \|\Delta_q\tau\|_{\dot{C}^\alpha}ds\\\nonumber
&&\quad +\ C\int_{0}^t 2^{\frac32 q}e^{-c2^{2q}(t -
  s)}\|(v \otimes v)(s, \cdot)\|_{\dot{C}^{1/2 + \alpha}}ds\\\nonumber
&&\leq C\big(\|v(0, \cdot)\|_{\dot{C}^{1 + \alpha}} +
  B(t)\big) + C\Big(\int_{0}^t\|v\|_{L^4}^4
  \|v\|_{\dot{C}^{1 + \alpha}}^4ds\Big)^{\frac{1}{4}},
\end{eqnarray}
where we have used Holder inequality and the fact that
$\| v \otimes v \|_{\dot{C}^{1/2 + \alpha}}  \leq C
  \|v\|_{L^4}
  \|v\|_{\dot{C}^{1 + \alpha}}$. Consequently, there holds
\begin{eqnarray}\nonumber
&&\|v(t, \cdot)\|_{\dot{C}^{1 + \alpha}}^4 \leq
  C\big(\|v(0, \cdot)\|_{\dot{C}^{1 + \alpha}} +
  B(t)\big)^4 + C\int_{0}^t\|v\|_{L^4}^4
  \|v\|_{\dot{C}^{1 + \alpha}}^4ds\\\nonumber
&&\leq C\big(\|v(0, \cdot)\|_{\dot{C}^{1 + \alpha}} +
  B(\widetilde{t})\big)^4 + C\int_{0}^{t}\|v(s, \cdot)\|_{
  L^2}^2\|\nabla v(s, \cdot)\|_{
  L^2}^2\|v(s, \cdot)\|_{\dot{C}^{1 + \alpha}}^4ds
\end{eqnarray}
for any fixed $\widetilde{t}: 0 \leq \widetilde{t} < T$ and $t \leq
\widetilde{t} < T$. Here we used  the fact that $B(t)$ is
nondecreasing. Consequently, Gronwall's inequality gives that
\begin{eqnarray}\nonumber
&&A(\widetilde{t})^4 = \sup_{0 \leq t < \widetilde{t}}\|v(t,
  \cdot)\|_{\dot{C}^{1 + \alpha}}^4\\\nonumber
&&\leq C\big(\|v(0, \cdot)\|_{\dot{C}^{1 + \alpha}} +
  B(\widetilde{t})\big)^4\exp\Big\{C\int_{0}^{t}\|v(s, \cdot)\|_{
  L^2}^2\|\nabla v(s, \cdot)\|_{L^2}^2ds\Big\}.
\end{eqnarray}
Since $\widetilde{t} \in [0, T)$ is arbitrary, using the basic
energy inequality \eqref{aprioriestimate}, we in fact have
\begin{eqnarray}\label{d5}
A(t) \leq C\big(\|v(0, \cdot)\|_{\dot{C}^{1 + \alpha}} +
B(t)\big),\quad 0 \leq t < T.
\end{eqnarray}

Next, by the second equation in \eqref{d2}, we have
\begin{eqnarray}\nonumber
&&\|\Delta_q\tau(t, \cdot)\|_{L^\infty} \leq
  \|\Delta_q\tau(0, \cdot)\|_{L^\infty} +
  \int_{0}^t\Big(2^q\|\Delta_qv(s, \cdot)\|_{L^\infty}\\\nonumber
&& \quad +\ \big\|\Delta_q\big(\nabla v\tau
  + \tau(\nabla v)^t\big)(s, \cdot)\|_{L^\infty} + \|[v\cdot
  \nabla, \Delta_q]\tau(s, \cdot)\big\|_{L^\infty}\Big)ds,
\end{eqnarray}
which implies that
\begin{eqnarray}\label{d6}
&&\|\Delta_q\tau(t, \cdot)\|_{\dot{C}^\alpha} \leq
  C\|\tau(0, \cdot)\|_{\dot{C}^\alpha} +
  \int_{0}^t\Big(\|v\|_{\dot{C}^{1 + \alpha}}\\\nonumber
&& \quad +\ \|\nabla v\|_{L^\infty}\|\tau\|_{\dot{C}^\alpha}
  + \|\tau\|_{L^\infty}\|v\|_{\dot{C}^{1 + \alpha}} + 2^{\alpha q}\|[v\cdot
  \nabla, \Delta_q]\tau(s, \cdot)\big\|_{L^\infty}\Big)ds.
\end{eqnarray}
By Bony's decomposition, one has
\begin{eqnarray}\nonumber
&&[v\cdot \nabla, \Delta_q]\tau = \sum_{|p - q^\prime| \leq 1}
  [\Delta_pv\cdot \nabla, \Delta_q]\Delta_{q^\prime}\tau\\\nonumber
&&\quad +\ \sum_{p \leq q^\prime - 2}[\Delta_pv\cdot \nabla,
  \Delta_q]\Delta_{q^\prime}\tau + \sum_{p \leq q^\prime -
  2}[\Delta_{q^\prime}v\cdot \nabla, \Delta_q]\Delta_p\tau\\\nonumber
&&= \sum_{|q^\prime - q| \leq 2}\big([S_{q^\prime -
  1}v\cdot \nabla, \Delta_q]\Delta_{q^\prime}\tau +
  [\Delta_{q^\prime}v\cdot \nabla, \Delta_q]S_{q^\prime
  - 1}\tau\big)\\\nonumber
&&\quad + \sum_{|p - q^\prime| \leq 1}
  [\Delta_pv\cdot \nabla, \Delta_q]\Delta_{q^\prime}\tau.
\end{eqnarray}
Noting that
\begin{eqnarray}\nonumber
[S_{q^\prime -  1}v, \Delta_q]f = \int h(y)\big[(S_{q^\prime -
1}v)(x) - (S_{q^\prime - 1}v)(x - 2^{-q}y)\big] f(x - 2^{-q}y)dy,
\end{eqnarray}
one has
\begin{eqnarray}\nonumber
\|[S_{q^\prime -  1}v, \Delta_q]f\|_{L^\infty} \leq
C2^{-q}\|\nabla S_{q^\prime -  1}v\|_{L^\infty}\|f\|_{L^\infty}.
\end{eqnarray}
Consequently, we have
\begin{eqnarray}\nonumber
&&\sum_{|q^\prime - q| \leq 2}\int_{0}^t\Big(2^{\alpha q}
  \big\|[S_{q^\prime -  1}v\cdot\nabla, \Delta_q]\Delta_{q^\prime}
  \tau(s, \cdot)\big\|_{L^\infty}\Big)ds\\\nonumber
&&\leq \sum_{|q^\prime - q| \leq 2}\int_{0}^t2^{\alpha q}
  2^{-q}\big\|\nabla S_{q^\prime -  1}v\|_{L^\infty}\|\Delta_{q^\prime}
  \nabla\tau\|_{L^\infty}(s, \cdot)ds\\\nonumber
&&\leq C\sum_{|q^\prime - q| \leq 2}\int_{0}^t\big\|\nabla
  S_{q^\prime -  1}v\|_{L^\infty}[2^{\alpha q^\prime}
  \|\Delta_{q^\prime}\tau\|_{L^\infty}](s, \cdot)ds\\\nonumber
&&\leq C\int_{0}^t\|\nabla v\|_{L^\infty}
  \|\tau\|_{\dot{C}^\alpha}ds.
\end{eqnarray}
Similarly,  we have
\begin{eqnarray}\nonumber
&&\sum_{|q^\prime - q| \leq 2}\int_{0}^t\Big(2^{\alpha q}
  \big\|[\Delta_{q^\prime}v\cdot\nabla, \Delta_q]S_{q^\prime -  1}
  \tau(s, \cdot)\big\|_{L^\infty}\Big)ds\\\nonumber
&&\leq \sum_{|q^\prime - q| \leq 2}\int_{0}^t2^{\alpha q}
  2^{-q}\big\|\Delta_{q^\prime}\nabla v\|_{L^\infty}\|S_{q^\prime -  1}
  \nabla\tau\|_{L^\infty}(s, \cdot)ds\\\nonumber
&&\leq C\int_{0}^t\|\tau\|_{L^\infty}
  \|v\|_{\dot{C}^{1 + \alpha}}ds.
\end{eqnarray}
At last, one computes that
\begin{eqnarray}\nonumber
&&\sum_{|p - q^\prime| \leq 1}\int_{0}^t\Big(2^{\alpha q}
  \big\|[\Delta_{p}v\cdot\nabla, \Delta_q]\Delta_{q^\prime}
  \tau(s, \cdot)\big\|_{L^\infty}\Big)ds\\\nonumber
&&\leq \sum_{p, q^\prime \backsim q} \int_{0}^t
  \Big(2^{(1 + \alpha)q}\big\|[\Delta_{p}v, \Delta_q]\Delta_{q^\prime}
  \tau(s, \cdot)\big\|_{L^\infty}\Big)ds\\\nonumber
&&\quad +\ \sum_{p, q^\prime \geq q + 2} \int_{0}^t
  \Big(2^{(1 + \alpha)q}\big\|[\Delta_{p}v, \Delta_q]\Delta_{q^\prime}
  \tau(s, \cdot)\big\|_{L^\infty}\Big)ds\\\nonumber
&&\leq C\int_{0}^t\|\tau\|_{L^\infty}
  \|v\|_{\dot{C}^{1 + \alpha}}ds.
\end{eqnarray}
The above inequalities  yield an improvement of \eqref{d6}:
\begin{eqnarray}\label{d7}
&&\|\tau(t, \cdot)\|_{\dot{C}^{\alpha}} \leq
  C\|\tau(0, \cdot)\|_{\dot{C}^\alpha}\\\nonumber
&&\quad +\ C\int_{0}^t\big(\|\nabla v\|_{L^\infty} +
\|\tau\|_{L^\infty}\big)\big(\|\tau(s, \cdot)\|_{\dot{C}^{\alpha}}
+ \|v(s, \cdot)\|_{\dot{C}^{1 + \alpha}}\big)ds.
\end{eqnarray}

Now let us insert \eqref{d5} into \eqref{d7} to get
\begin{eqnarray}\nonumber
&&B(t) = \sup_{0 \leq s < t}\|\tau(t, \cdot)
  \|_{\dot{C}^{\alpha}} \leq C\|\tau(0, \cdot)\|_{\dot{C}^\alpha}\\\nonumber
&&\quad +\ C\int_{0}^t\big(\|\nabla v\|_{L^\infty} +
  \|\tau\|_{L^\infty}\big)\big(\|v(0,
  \cdot)\|_{\dot{C}^{1 + \alpha}} + B(s)\big)ds.
\end{eqnarray}
Noting that by the inequalities in Lemma \ref{lem1}, we can estimate
\begin{eqnarray}\nonumber
&&\int_{0}^t\big(\|\nabla v\|_{L^\infty} +
  \|\tau\|_{L^\infty}\big)ds\\\nonumber
&&\leq \int_{0}^{t_\star}\big(\|\nabla v\|_{L^\infty} +
  \|\tau\|_{L^\infty}\big)ds + C\int_{t_\star}^t\big(1 + \|v\|_{L^2} +
  \|\tau\|_{L^2}\big)ds\\\nonumber
&&\quad +\  C\sup_{q}
  \int_{t_\star}^t\|\nabla\Delta_qv\|_{L^\infty}ds\ln\big(e +
  \int_0^t\|v\|_{\dot{C}^{1 + \alpha}}ds\big)\\\nonumber
&&\quad +\ C\int_{t_\star}^t
  \|\tau\|_{{\rm BMO}}\ln(e +
  \|\tau\|_{\dot{C}^{\alpha}})ds\\\nonumber
&&\leq C_\star + C\epsilon\ln\big[e + Ct\big(\|v(0,
  \cdot)\|_{\dot{C}^{1 + \alpha}} + B(t)\big)\big] +
  C\epsilon\ln\big(e + B(t)\big)\\\nonumber &&\leq C_\star +
C\epsilon\ln\big(e + \|v(0,
  \cdot)\|_{\dot{C}^{1 + \alpha}} + B(t)\big).
\end{eqnarray}
Here $C_\star$ is a positive constant depending on the solution $(v,
\tau)$ on $[0, t_\star]$. Consequently, we have
\begin{eqnarray}\nonumber
B(t) \leq C_\star\Big(1 + \|\tau(0, \cdot)\|_{\dot{C}^\alpha} +
\|v(0, \cdot)\|_{\dot{C}^{1 + \alpha}}\Big) +
C\int_{0}^t\big(\|\nabla v\|_{L^\infty} +
  \|\tau\|_{L^\infty}\big)B(s)ds.
\end{eqnarray}
Then Gronwall's inequality yields that
\begin{eqnarray}\nonumber
&&e + \|v(0, \cdot)\|_{\dot{C}^{1 + \alpha}} + B(t)\\\nonumber
&&\leq C_\star\Big(e + \|\tau(0, \cdot)\|_{\dot{C}^\alpha} +
  \|v(0, \cdot)\|_{\dot{C}^{1 + \alpha}}\Big)\exp\Big\{C
  \int_{t_\star}^t\big(\|\nabla v\|_{L^\infty} +
  \|\tau\|_{L^\infty}\big)ds\Big\}\\\nonumber
&&\leq C_\star\Big(e + \|\tau(0, \cdot)\|_{\dot{C}^\alpha} +
  \|v(0, \cdot)\|_{\dot{C}^{1 + \alpha}}\Big)e^{C_\star
  + C\epsilon\ln\big(e + \|v(0, \cdot)\|_{\dot{C}^{1 + \alpha}} + B(t)\big)}\\\nonumber
&&\leq C_\star\Big(e + \|\tau(0, \cdot)\|_{\dot{C}^\alpha} +
  \|v(0, \cdot)\|_{\dot{C}^{1 + \alpha}}\Big)\big(e + \|v(0,
  \cdot)\|_{\dot{C}^{1 + \alpha}} + B(t)\big)^{C\epsilon}.
\end{eqnarray}
From the above inequalities and \eqref{d5}, we have
\begin{eqnarray}\label{d8}
A(t) + B(t) \leq C_\star\Big(1 + \|\tau(0, \cdot)\|_{\dot{C}^\alpha}
+ \|v(0, \cdot)\|_{\dot{C}^{1 + \alpha}}\Big)^2
\end{eqnarray}
by choosing $\epsilon = \frac{1}{2C}$.

\section{Proof of Corollary \ref{thm-criterion-MHD}}

In fact, it is easy to see that the tensor $H\otimes H$ satisfies
the following transport equation:
\begin{eqnarray}\label{k1}
\partial_t(H\otimes H) + v\cdot\nabla(H\otimes H) = \nabla v(H\otimes
H) + (H\otimes H)(\nabla v)^t.
\end{eqnarray}
Hence, the tensor $H\otimes H - \frac{1}{2}I$ plays the role of
$\tau$ (However, it seems not being able to directly apply Theorem
\ref{thm-criterion-Oldroyd} to get Corollary
\ref{thm-criterion-MHD}). The rest part of the proof of Corollary
\ref{thm-criterion-MHD} is similar as that of Theorem
\ref{thm-criterion-Oldroyd}.

In fact, by the assumption $\|v(0, \cdot)\|_{L^2\cap\dot{C}^{1 +
\alpha}} + \|H(0, \cdot)\|_{L^2\cap\dot{C}^\alpha} < \infty$, one
can easily derive that
\begin{eqnarray}\label{k2}
\|v(0, \cdot)\|_{L^2 \cap \dot{C}^{1 + \alpha} (\mathbb{R}^2) } +
\|H\otimes H(0, \cdot)\|_{ L^1 \cap \dot{C}^\alpha (\mathbb{R}^2)} <
\infty.
\end{eqnarray}
Moreover, one has the following energy law
\begin{eqnarray}\label{k3}
\|(v, H)\|_{L^\infty_T(L^2)} + 2\|\nabla v\|_{L^2_T(L^2)} = \|(v,
H)(0, \cdot)\|_{L^2},
\end{eqnarray}
which gives that
\begin{eqnarray}\label{k4}
\|H\otimes H\|_{L^\infty_T(L^1)} < \infty.
\end{eqnarray}
Having \eqref{k2}, \eqref{k3} and \eqref{k4} in hand, and noticing
the assumption \eqref{a9} and the transport equation \eqref{k1} for
$H\otimes H$, one has
\begin{eqnarray}\nonumber
\|v(t, \cdot)\|_{\dot{C}^{1 + \alpha}} + \|H\otimes H(t,
\cdot)\|_{\dot{C}^\alpha} < \infty
\end{eqnarray}
by an exactly same manner as in section 3. Coming back to the
transport equation for $H$ in \eqref{MHD}, we have
\begin{eqnarray}\nonumber
\|H\|_{\dot{C}^\alpha} < \infty
\end{eqnarray}
in a standard manner. This completes the proof of Corollary
\ref{thm-criterion-MHD}.

\section*{Acknowledgement}
Zhen Lei was in part supported by NSFC (grant no. 10801029), a
special Postdoc Science Foundation of China (grant no. 200801175)
and SGST 09DZ2272900. Nader Masmoudi was in part supported by NSF
under Grant DMS-0703145. Yi Zhou is partially supported by the NSFC
(grant no. 10728101), the 973 project of the Ministry of Science and
Technology of China, the Doctoral Program Foundation of the Ministry
of Education of China, the ¡±111¡± project (B08018) and SGST
09DZ2272900.



\end{document}